\newcommand{\R}{\mathbb{R}}
\newcommand{\C}{\mathbb{C}}
\renewcommand{\Re}{\tn{Re}}	
\renewcommand{\phi}{\varphi}
\renewcommand{\d}{\, d}									               
\newcommand{\tn}[1]{\textnormal{#1}}					                   
\newcommand{\mat}[1]{\begin{pmatrix} #1 \end{pmatrix}}	               
\renewcommand{\l}{\left}                                               
\renewcommand{\r}{\right}								               
\renewcommand{\b}[1]{\l( #1 \r)}						               
\newcommand{\mc}[1]{\mathcal{#1}}						               
\newcommand{\st}{\,\colon\,}							              	   
\newcommand{\ip}[1]{\l\langle #1 \r\rangle}				               
\renewcommand{\pd}[2]{\frac{\partial #1}{\partial #2}}                 
\newcommand{\eq}[1]{\begin{equation} #1 \end{equation}}
\newcommand{\aeq}[1]{\begin{align}\begin{split} #1 \end{split}\end{align}}
\newcommand{\Tr}{\tn{Tr}\,}
\theoremstyle{plain}
\newtheorem{thm}[equation]{Theorem}
\newtheorem{prop}[equation]{Proposition}
\theoremstyle{definition}
\newtheorem{defn}[equation]{Definition}
\newtheorem{exa}[equation]{Example}
\newtheorem{rmk}[equation]{Remark}
\title{A Deformation Approach to the BFK Formula}
\author{Romain Speciel}
\date{November 13, 2025}
\begin{document}
\maketitle


\abstract{Understanding how spectral quantities localize on manifolds is a central theme in geometric spectral theory and index theory. Within this framework, the BFK formula, obtained by Burghelea, Friedlander and Kappeler in 1992, describes how the zeta-regularized determinant of an elliptic operator decomposes as the underlying manifold is cut into pieces. In this paper, we present a novel proof of this result. Inspired by work of Br\"uning and Lesch on the eta invariant of Dirac operators, we derive the BFK formula by interpolating continuously between boundary conditions and understanding the variation of the determinant along this deformation.}

\section{Introduction}
\label{sec: intro}

The zeta-regularized determinant of the Laplacian, introduced in \cite{RS71}, is a global spectral invariant of smooth Riemannian manifolds with important interpretations and applications in geometry, topology and mathematical physics (see Chapter 5 of \cite{R97} for an overview and \cite{OPS88} for an elegant application). Explicitly computing this invariant is typically impossible, but a useful tool to understand its nature is the Mayer-Vietoris type gluing formula proven by Burghelea, Friedlander and Kappeler in \cite{BFK92}, commonly called the \textit{BFK formula}. The BFK formula, which applies to a broad class of elliptic boundary problems on bundles, relates the determinant of an elliptic differential operator across different complementary boundary conditions via the determinant of an intermediate lower order pseudodifferential operator on the boundary.

This formula is part of a broader body of work which seeks to understand how various global spectral quantities localize on manifolds. Such results have been successfully proved through various approaches; we provide several examples chronologically for context. In \cite{BFK92}, where the original proof of the BFK formula is presented, the authors perturb the interior operator by adding to it a multiple of the identity, then explicitly compute the corresponding variation of the determinant in terms of the perturbation parameter. In \cite{M94}, M\"{u}ller studies how the eta invariant, another natural spectral quantity which arises in index theory, varies on a manifold with cylindrical boundary as the boundary gets pushed to infinity. In \cite{MM95}, Mazzeo and Melrose compute how the eta invariant varies under analytic surgery, in which a submanifold is pushed to infinity by taking degenerating metrics which limit to a $b$-metric. These ideas are extended by Hassell, Mazzeo and Melrose in \cite{HMM95}, and applied to handle analytic torsion, a spectral invariant with connections to the combinatorial structure of the underlying manifold. In \cite{B95}, Bunke studies the adiabatic limit of the eta invariant of the Dirac operator associated to a pair of APS-type boundary conditions. In \cite{CLM96}, Cappell, Lee and Miller develop a splicing technique which is applied to study localization of low eigensolutions of a self-adjoint elliptic operator. And in \cite{BL99}, Br\"{u}ning and Lesch study the Dirac operator on a manifold with boundary, continuously deform the domain of the operator by varying the boundary conditions, and compute how the eta invariant of the Dirac operator changes through this deformation.

In this paper, we present a new proof of the BFK formula, inspired mainly by the arguments in \cite{BL99}. We develop this deformation approach with the aim of obtaining a proof method which will more readily generalize to settings where we cut along submanifolds with geometric singularities such as corners. For clarity, we state the main result here but delay precise definitions to the next section.

Let $A$ be an admissible elliptic differential operator of positive order on a compact manifold $M$ with nonempty boundary $N$. Fix $B_0$ and $B_1$ two complementary boundary conditions, and take $Q$ to be the induced correspondence operator on the boundary. Then, with $A_i$ denoting the operator $A$ confined to the domain $\ker B_i$, we have

\begin{thm}[The BFK Formula \cite{BFK92}]
\label{thm: BFK}
	The zeta-regularized determinants of $A_0$ and $A_1$ are related by
    \eq{
    \label{eq: BFK Statement}
    \frac{\det A_1}{\det A_0}=c\cdot \det Q,
    }
    where $c$ is a local quantity which can be computed in terms of the symbols of $A$, $B_0$ and $B_1$ along $N$.
\end{thm}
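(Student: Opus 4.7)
The strategy is a variational argument: interpolate between $A_0$ and $A_1$ through the family $A_t$ determined by the boundary conditions $B_t$, compute the derivative of the determinant in $t$, and integrate. Writing $\det A_t = e^{-\zeta_t'(0)}$, this amounts to evaluating
\[
\log\frac{\det A_0}{\det A_1} \;=\; \zeta_1'(0)-\zeta_0'(0) \;=\; \int_0^1 \partial_t\zeta_t'(0)\d t,
\]
so the proof splits into (i) deriving a workable expression for $\partial_t\zeta_t'(0)$ on $t\in(0,1]$, (ii) identifying the integral of its non-local part with $\log\det Q(0)$, and (iii) handling the endpoint $t=0$ at which the order $\omega_j(t)$ jumps.

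For step (i), the first move is to derive the variational identity for the resolvent. Differentiating $(A-z)R_t(z)u=u$ and $B_tR_t(z)u=0$ in $t$ shows that $\dot R_t(z)u$ lies in $\ker(A-z)$ with boundary datum $-(B_1-B_0)R_t(z)u$, giving
\[
\dot R_t(z) \;=\; -P_t(z)(B_1-B_0)R_t(z).
\]
Inserting this into the Seeley representation of $\zeta_t(s)=\Tr A_t^{-s}$ and using cyclicity of the trace to push the boundary operator to the outside yields
\[
\partial_t\zeta_t(s) \;=\; -\frac{1}{2\pi i}\int_\gamma z^{-s}\Tr\!\bigl((B_1-B_0)R_t(z)P_t(z)\bigr)\d z,
\]
where the right-hand trace is on $\oplus_j L^2(F_j)$, so the problem has been localized to a family of pseudodifferential operators on $N$.

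For step (ii), I introduce the boundary operator $Q_t(z) := B_1 P_t(z)$, which interpolates between $Q_0(z)=Q(z)$ and $Q_1(z)=I$. Using $B_tP_t(z)=I$ one obtains $(B_1-B_0)P_t(z) = (1-t)^{-1}(Q_t(z)-I)$, and the analogous variation $\dot P_t(z)=-P_t(z)(B_1-B_0)P_t(z)$ gives the ODE $\dot Q_t(z) = -(1-t)^{-1}Q_t(z)(Q_t(z)-I)$. Manipulating the trace identity above with these relations, continuing meromorphically in $s$, and differentiating at $s=0$, the aim is to show
\[
\partial_t\zeta_t'(0) \;=\; -\partial_t\log\det Q_t(0) + \ell(t),
\]
with $\ell(t)$ a purely local remainder expressible in terms of the symbols of $A$, $B_0$ and $B_1$. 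Integrating in $t\in[0,1]$ then telescopes the non-local piece into $\log\det Q_0(0)-\log\det Q_1(0)=\log\det Q(0)$, and $\int_0^1\ell(t)\d t$ defines $\log c$.

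The hard part is step (iii). Because $\omega_j(t)$ jumps at $t=0$, the family $(A,B_t)$ is not uniformly regular across this endpoint; the operators $P_t(z)$ and $Q_t(z)$ degenerate as $t\to 0^+$, and the naive variation formula may produce singular integrands. One must verify that $\partial_t\zeta_t'(0)$ remains integrable near $0$ and show that the singular contribution reassembles precisely into the local constant $c$. This endpoint analysis is the technical heart of the argument and parallels the regularization step in Br\"uning--Lesch \cite{BL99}; extracting the explicit symbolic form of $c$ reduces to a computation with the model resolvent and Poisson operator near the boundary, controlled by the Agmon angle hypotheses.
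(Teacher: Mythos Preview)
Your variational setup in step (i) is correct and matches the paper's Proposition~\ref{prop: derivative computations}. The divergence---and the gap---is in step (ii). For $t\in(0,1)$ the boundary operator $B_t$ has the same order as $B_1$ (since $\tn{ord}\,B_1^j>\tn{ord}\,B_0^j$), so $Q_t(z)=B_1P_t(z)$ is a \emph{zeroth-order} pseudodifferential system on $N$; its zeta-regularized determinant is therefore not defined in the usual sense, and your target identity $\partial_t\zeta_t'(0)=-\partial_t\log\det Q_t(0)+\ell(t)$ has no meaning as written. Relatedly, the paper explicitly notes (Remark after Proposition~\ref{prop: diff of zeta functions}) that $R_1(z)-R_0(z)$ is not trace class, so your cyclicity step producing $\Tr\bigl((B_1-B_0)R_t(z)P_t(z)\bigr)$ already needs justification; the paper supplies this by inserting a regularizing factor $Q(z)^{-w}$ with $\Re(w)$ large, which your proposal lacks.

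The paper avoids both difficulties by reversing the order of operations: rather than analyzing $\zeta_t'(0)$ for each $t$ and integrating, it \emph{first} performs the $t$-integral at the level of the boundary operator. The key lemma (Proposition~\ref{prop: log of d-to-n}) is the identity
\[
\int_0^1 (B_1-B_0)P_t(z)\,dt \;=\; \log Q(z),
\]
proved by diagonalizing $Q(z)$ (using the self-adjointness hypothesis on $\R_-$) and computing on a $\lambda$-eigenvector that $B'P_t(z)f=\frac{\lambda-1}{1+t(\lambda-1)}f$, which integrates to $\log\lambda$. This yields directly $\zeta_1(s)-\zeta_0(s)=\frac{-s}{2\pi i}\int_\gamma z^{-s-1}\log\det Q(z)\,dz$, with no endpoint singularity at $t=0$ whatsoever. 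The local constant $c$ then arises \emph{not} from any integrated remainder $\int_0^1\ell(t)\,dt$ or from endpoint regularization, but from the coefficient $\pi_0$ in the asymptotic expansion of $\log\det Q(x)$ as $x\to-\infty$ along the Agmon ray (Proposition~\ref{prop: asymptotic expansion}). So your step (iii) misidentifies the technical heart: the issue is not the $t=0$ endpoint but rather making the trace manipulations rigorous and extracting the constant from large-$|z|$ asymptotics of $Q(z)$.
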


The proof of Theorem \ref{thm: BFK} presented here proceeds by explicitly considering how the determinant varies along a family of operators $A_t$ interpolating between $A_0$ and $A_1$, rather than by perturbing the interior operator directly, as originally done in \cite{BFK92}.

\begin{rmk}
	To provide context to this approach, we describe here in more detail the strategy employed by Br\"uning and Lesch in \cite{BL99}. The authors obtain a gluing law for the eta invariant associated to a Dirac operator on a manifold with boundary by rotating through a family of boundary conditions. More specifically, they consider a Hermitian vector bundle $E$ over a compact Riemannian manifold $M$, and a first order symmetric elliptic differential operator $D$ acting on smooth sections of the bundle (the main example of which are Dirac operators). With $N\subset M$ a compact hypersurface, they note that $D$ restricted to $C^\infty(E|_{M\setminus N})$ is no longer essentially self-adjoint, and consider therefore two natural boundary conditions: the APS boundary condition and the continuous transmission boundary condition. The authors then explicitly parametrize a path between these boundary conditions and show that the variation of the eta invariant across this path can be computed by studying the operator $D$, conjugated by appropriate rotations which map between the domains corresponding to each of the boundary conditions. (Details are outlined in Section 3 of \cite{BL99}.)
\end{rmk}

Our proof carries this idea over to the zeta-regularized determinant. The main difference is that there no longer is a natural set of rotations carrying the domain corresponding to one boundary condition to that corresponding to the other boundary condition. Instead, we work directly with the resolvent, and obtain variational formulas by studying how this operator changes as we interpolate between $B_0$ and $B_1$.

In Section \ref{sec: Setup}, we provide rigorous definitions and assumptions for our theorem, and place the BFK formula in context by describing as an example the case of the Laplacian on a compact Riemannian manifold. In Section \ref{sec: props PQR}, we prove several relationships linking the Poisson extension operator $P$, the operator $Q$ which serves as an analogue of the Dirichlet-to-Neumann map, and the resolvent $R$. In particular, we establish in Proposition \ref{prop: log of d-to-n} a surprising integral identity which later serves as the key for our proof. Finally, in Section \ref{sec: bfk}, we use properties of the trace functional and identities from the meromorphic functional calculus to deduce a link between the zeta functions of $A_0, A_1$ and the trace of the operator $Q$. The main technical challenges arise here in the subtlety of trace class operators, and this is handled by introducing high powers of regularizing operators and then applying unique continuation to deduce results globally. Finally, we use these identities to deduce the BFK formula.

\vspace{1em}
\noindent\textbf{Acknowledgements.} The author thanks Rafe Mazzeo for suggesting this project and providing kind guidance throughout. He also thanks Josef Greilhuber and Gregory Parker for many helpful conversations, and the anonymous referee for detailed and thorough feedback. This work was in part supported by an NSERC-PGSD grant.

\vspace{3em}

\section{Setup}
\label{sec: Setup}

We now describe in detail our setting, which is analogous to that of \cite{BFK92}. Let $M$ be a compact manifold of dimension $d\geq 2$ with nonempty boundary $N=\partial M$, and take $E\to M$ to be a smooth vector bundle over $M$. Suppose $A\colon C^\infty (E)\to C^\infty (E)$ is an elliptic differential operator of order $\omega>0$. Let $F_j\to N$, $1\leq j\leq k$, be smooth vector bundles over the boundary, and consider for each $j$ a pair of boundary differential operators $B^j_i \colon C^\infty(E)\to C^\infty(F_j)$, for $i=0,1$, such that $\tn{ord}\,B_1^j-\tn{ord}\,B_0^j$ is positive and constant in $j$, and $\tn{ord}\,B^j_i< \omega$. Writing
\eq{
	B_t^j=(1-t)B_0^j+tB_1^j\quad \tn{and} \quad B_t=(B_t^1,\dots, B_t^k),
}
we require that $B_0$ and $B_1$ be \textit{complementary}, meaning that the intersection of their null spaces consists of sections of $C^\infty(E)$ which vanish up to order $\omega-1$ on $N$.

Next, we assume $(A,B_t)$ is an invertible elliptic boundary problem in the sense of \cite{H07} for $0\leq t\leq 1$ and hence extends to a bijective continuous operator
\eq{
	(A,B_t)\colon H^\omega(E)\to L^2(E)\oplus H^{\omega-\omega_1(t)-1/2}(F_1)\oplus\dots \oplus H^{\omega-\omega_k(t)-1/2}(F_k),
}
where $\omega_j(t)=\tn{ord }B_t^j$. Importantly, note $\omega_j(t)$ is discontinuous at $t=0$.

\begin{exa}[The Laplacian on a manifold with boundary]
\label{ex: laplacian}
	The following setup is our main motivating example. Suppose $M$ is equipped with a Riemannian metric $g$ and denote by $\Delta_g$ the corresponding (positive) Laplacian acting on $C^\infty(M)$. Define the Dirichlet and Neumann boundary operators, denoted $\mc{D}$ and $\mc{N}$ respectively, by
	\eq{
		\mc{D}u=u|_{\partial M} \quad \tn{and}\quad \mc{N}u=\pd{u}{n}\Big{|}_{\partial M},
	}
	where $n$ is the outward-pointing unit normal at the boundary. With $E$ and $F_1$ the trivial line bundles over their respective spaces (i.e. considering the Laplacian acting simply on real valued functions), set $B_0=\mc{D}$ and $B_1=\mc{N}$. Finally, to ensure invertibility, set $A=\Delta_g+\epsilon$ for some positive $\epsilon$. The discussed assumptions are then clearly satisfied.
\end{exa}

Let $A_t$ denote the operator which acts as $A$ on its domain $\tn{dom}(A_t)=\ker (B_t)\subset H^\omega(E)$, and define the corresponding \textit{resolvent operator} by 
\aeq{
	R_t(z)\colon L^2(E)&\to H^\omega(E)\\
	u&\mapsto(A-z,B_t)^{-1}(u,0)
}
and \textit{Poisson operator} by
\aeq{
	P_t(z)\colon H^{\omega-\omega_1(t)-1/2}(F_1)\oplus\dots \oplus H^{\omega-\omega_k(t)-1/2}(F_k)&\to H^\omega(E)\\
	f=(f_1,\dots, f_k) & \mapsto (A-z,B_t)^{-1}(0,f).
}
Both of these families of operators depend meromorphically on $z$ as a consequence of the analytic Fredholm theorem.

Away from the spectrum of $A_0$, the triple $(A-z, B_0, B_1)$ gives rise to a natural map between sections of $\oplus _jF_j$ which correspond to each other as the image under either $B_0$ or $B_1$ of a solution of $(A-z)u=0$. This map is given by the operator $Q(z)=B_1P_0(z)$, which we assume to be positive and self-adjoint when $z$ is in the ray $\R_-=\{r e^{i\pi}\st r \geq 0\}$. We shall write $Q=Q(0)$.

\begin{exa}[The Dirichlet-to-Neumann map]
\label{ex: DtoN}
	We continue with Example \ref{ex: laplacian}. In that setting, the problem $(A=\Delta_g+\epsilon, B_t=(1-t)\mathcal{D}+t\mathcal{N})$ is well known to be an elliptic boundary problem (see \cite{H07} for an extensive discussion, for example). The operator $Q(z)$ described above corresponds to the standard Dirichlet-to-Neumann map on $N$ with respect to interior extension operator $\Delta_g+\epsilon -z$. In this setting, it is clear to see why the restriction $z\notin \tn{spec }A_0$ is necessary: $P_0(z)$ fails to be well defined precisely when the problem $(\Delta_g+\epsilon -z)u=0$ on $M$, $u=f$ on $N$ does not have a unique solution. Indeed, if $u,\tilde u$ are two distinct solutions to the problem, observe $u-\tilde u$ is a $z$-eigenfunction of $A_0$ (and, in the other direction, the existence of such an eigenfunction obstructs uniqueness of extensions). Next, we confirm $Q(z)$ is positive and self-adjoint on $\R_-$: by Green's identity,
	\eq{
		\ip{Q(z)f,f}_{L^2(N)}=\norm{\nabla P_0(z)f}^2_{L^2(M)}+(\epsilon-z)\norm{P_0(z)f}^2_{L^2(M)}>0
		}
	since $z<0$, and similarly
	\aeq{
		\ip{f,Q(z)g}_{L^2(N)}-\ip{Q(z)f,g}_{L^2(N)}&=\ip{B_0P_0(z)f,B_1P_0(z)g}_{L^2(N)}-\\
		&\qquad \qquad \ip{B_1P_0(z)f,B_0P_0(z)g}_{L^2(N)}\\
		&= (z-\overline z)\ip{P_0(z)f,P_0(z)g}_{L^2(M)},
	}
	which vanishes when $z\in \R$.
\end{exa}

\begin{rmk}
	Note that the computations presented above hold more generally when $A$ is a symmetric divergence-form elliptic operator, thus the assumption that $Q(z)$ is self-adjoint on $\R_-$ is automatically satisfied in those cases. In general, however, this assumption will be necessary for Proposition \ref{prop: log of d-to-n} to hold.
\end{rmk}

In order to ensure that our operators have well defined zeta-regularized determinants, we shall require that they satisfy additional spectral properties which guarantee certain path integrals in $\C$ to be well defined and convergent. This is captured by the following definition.

\begin{defn}
	For $B$ any boundary differential operator such that $(A,B)$ is an elliptic boundary problem, the angle $\theta$ is a \textit{principal angle} for $(A,B)$ if the following conditions are satisfied:
	\begin{enumerate}
		\item \textit{(Agmon angle condition)} For every $p\in M$ and $\xi\in T^*_p(M)\setminus 0$, we have
		\eq{
			\tn{spec}\,\sigma_\omega(A)(p,\xi)\cap \{r e^{i\theta}\st r \geq 0\}=\varnothing,
		}
		where $\sigma_\omega(A)$ denotes the principal symbol of $A$.
		\item \textit{(Shapiro-Lopatinski condition)} In a collar neighborhood $N\times [0,\infty)$ of the boundary, let $(x,y)$ denote the induced coordinates and $(\eta,\tau)$ the corresponding coordinates on the cotangent bundle. Write $\alpha=(\alpha',\alpha_d)$, a multi-index of length $d$, and say $A=\sum_{\alpha}a_\alpha(p)D^\alpha$ in coordinates. Then, given $r\geq 0$ and $f_j\in F_j$ for $1\leq j\leq k$, there exists a unique solution $u(x,y)$ to
		\eq{
			\sum_{\abs{\alpha}=\omega}a_\alpha(x,0)(\eta^{\alpha'})D_y^{\alpha_d}u(x,y)=re^{i\theta}u(x,y)
		}
		satisfying both $\lim_{y\to \infty}u(x,y)=0$ and
		\eq{
			\sum_{l=\omega-\omega_j}^\omega \sigma_{\omega_j}(B_j)(x,\eta)D_y^{\omega-l}u(x,0)=f_j
		}
		for every $j$.

	\end{enumerate}
\end{defn}

We assume that $\pi$ is a principal angle for $(A,B_t)$ for every $0\leq t\leq 1$, which allows us to define complex powers of $A_t$ as in the Seeley calculus. To this end, let $\gamma$ be a contour in the complex plane around the ray $\R_-$ traveling clockwise around the origin, so that it positively winds around every point of the spectrum (see Figure 1). Taking the branch cut of $z^{-s}$ along $\R_-$, define
\eq{
	A_t^{-s}=\frac{i}{2\pi}\int_{\gamma} z^{-s}R_t(z)\d z.
}
When $\tn{Re}(s)$ is sufficiently large, this operator is well defined and trace class. Its trace, denoted by $\zeta_t(s)$, extends meromorphically to the whole complex plane and is regular at $s=0$ (we refer the reader to \cite{S69-2, S69-1} for details). Finally, we may now define
\eq{
	\det A_t=e^{-\zeta_t'(0)},
}
the \textit{zeta-regularized determinant} of $A_t$.

\begin{exa}[The determinant of the Laplacian] We continue with Examples \ref{ex: laplacian} and \ref{ex: DtoN}. Since $A_t$, which is simply $\Delta_g+\epsilon$ restricted to the Robin-boundary condition domain $\ker((1-t)\mathcal{D}+t\mathcal{N}))$, is a positive self-adjoint operator and its principal symbol is  $\abs{\xi}^2$, conditions 1 and 2 in the definition of principal angle are satisfied with $\theta=\pi$. We may then define $A_t^{-s}$, and the Weyl law tells us that this will be a bounded (and in fact trace class) operator when $\Re(s)>d/2$. In fact, denoting the eigenvalues of $A_t$ by $\lambda_1,\lambda_2,\dots$, we may rewrite
\eq{
	\tn{Tr}(A_t^{-s})=\sum_j \lambda_j^{-s}
}
to then note the connection to the standard zeta function. We may then seek to take $\epsilon\to 0$ and regularize the limit in order to get a well defined determinant of the genuine Laplacian, rather than its shift $A$. This works, and is described in the proof of Theorem B* in \cite{BFK92}. See also \cite{L97} for further specialization to Laplace type operators.
\end{exa}

Next, we seek to extend our definition of zeta-regularized determinant to matrices of operators since $Q(z)$, which appears in the right hand side of Equation \ref{eq: BFK Statement}, is a $k\times k$ matrix of $\Psi$DOs. The necessary properties are described in the following definition.

\begin{defn}
	We say the operator $Q=\b{Q_{ij}}\colon \oplus_{j=1}^k C^\infty(F_j)\to \oplus_{j=1}^k C^\infty(F_j)$ is \textit{of order} $(\alpha_1,\dots, \alpha_k;\beta_1,\dots, \beta_k)$  if $\tn{ord }Q_{ij}=\alpha_i+\beta_j$. Define then the \textit{principal symbol} of $Q$ to be the matrix $\sigma(Q)=\b{\sigma_{\alpha_i+\beta_j}(Q_{ij})}$, and say $Q$ is  \textit{elliptic in the sense of Agmon, Douglis and Nirenberg} if for any $p\in M$ and $\xi\in T^*_p(M)\setminus 0$, the map $\sigma(Q)(p,\xi)$ is invertible.
\end{defn}

We shall show in Proposition \ref{prop: properties of Q} that $Q(z)$ is indeed elliptic in the sense of Agmon, Douglis and Nirenberg when $z\notin \tn{spec }A_0$. With minor modification, the notion of a principal angle carries over to such operators, and we assume $\pi$ is also a principal angle for $Q(z)$, hence the determinant of $Q(z)$ is well defined (again only when $z\notin \tn{spec }A_0$). The details of these definitions can be found in Section 2 of \cite{BFK92}.

\begin{rmk}
	For simplicity, we have taken $\pi$ to be the principal angle for all involved operators. In particular, this is the case for the discussed example of the Laplacian on a manifold, as the Dirichlet-to-Neumann operator is positive. However, all proofs presented below can be modified to the general case without too much complication: one must ensure that the path $\gamma$ used in the Seeley calculus is adapted to the principal angle, and that the dependence of the determinants on the principal angle be made explicit. The proof in \cite{BFK92}, for example, accounts for these details. Our proof, however, has an additional assumption: we make use of the fact that $Q(z)$ is self-adjoint when $z\in \R_-$ in Proposition \ref{prop: log of d-to-n}. To adapt this condition to a general principal angle, one may ask that $Q(z)$ be self-adjoint when $z$ lies on the ray corresponding to the angle.
\end{rmk}

\begin{rmk}[Decomposing the determinant along a cut]
	The original statement of the BFK formula in \cite{BFK92} differs slightly from our formulation in Theorem \ref{thm: BFK}; we outline in this remark how the traditional perspective may be recovered from our approach in the case of the Laplacian.
	
	Let $M$ be a smooth Riemannian manifold \textit{without} boundary and take $N$ an oriented submanifold of codimension $1$, which we assume for simplicity to have trivial normal bundle. Denote by $M_\tn{cut}$ the compact manifold obtained by cutting $M$ along $N$, with boundary $N^-\sqcup N^+$. Define the sum and difference operators by, respectively,
	\eq{
	\begin{gathered}
		\sigma, \delta\colon C^\infty(N^-)\oplus C^\infty(N^+)\to C^\infty(N),\\
		\sigma (f,g)=f+g,\quad  \delta(f,g)=f-g.
	\end{gathered}
	}
	Now, consider the two complementary boundary conditions $B_0=\mathcal{D}$ and $B_1=\mathcal{N}$ on $M_\tn{cut}$ as before, and denote by $B_i^{\pm}$, $i=0,1$,  their composition with the restriction map to $N^\pm$. We introduce for $\tau \in \R$ the twisted boundary operator
	\eq{
	\begin{gathered}
		T_\tau\colon C^\infty(M_\tn{cut})\to C^\infty(N^-)\oplus C^\infty(N^+)\cong C^\infty(N)\oplus C^\infty(N)\\
		u\mapsto \b{\delta(B_0^-u, B_0^+u)+\tau \cdot \delta(B_1^-u, B_1^+u), \sigma(B_1^-u, B_1^+u)}. 
	\end{gathered}
	}
	The two boundary operators $B_0$ and $T_\tau$ form a complementary pair of boundary conditions when $\tau\neq 0$.
	
	Now, following the previous examples, take $A=\Delta_g+\epsilon$ for some fixed $\epsilon>0$. Applying Theorem \ref{thm: BFK}, we obtain
	\eq{
	\label{eq: trad BFK}
		\frac{\det(A,T_\tau)}{\det(A,B_0)}= c(\tau) \det Q_\tau,
	}
	where $Q_\tau=T_\tau P_0(0)$. However, all terms in which $\tau$ appears are continuous in $\tau$. Indeed, the constant $c(\tau)$ depends on the symbol of $Q_\tau$, which varies continuously, and similarly $Q_\tau$ tends continuously to $Q_0$, the twisted Dirichlet-to-Neumann map corresponding to $M_\tn{cut}$. (The continuous dependence of the determinant on a parameter $\tau$ is discussed in particular in Section 4 of \cite{L97}.) Furthermore, $T_0$ corresponds precisely to the transmission condition across $N$, and by elliptic regularity we thus have
	\eq{
		\det(A, T_0)=\det(A)
	}
	(where the first determinant is understood on $M_\tn{cut}$, and the second on $M$). We therefore recover from Equation \ref{eq: trad BFK} the main result of \cite{BFK92}.
\end{rmk}

\begin{rmk}
	Theorem \ref{thm: BFK} is a generalization of the main result in \cite{LP05}, which considers explicitly the setting described in our series of examples. Furthermore, the proofs in both \cite{LP05} and \cite{BFK92} proceed by obtaining explicit expressions of the variation $\pd{}{t}\b{\log \det(A+t,B_1)-\log \det(A+t,B_0)}$, using the fact that for specific families of operators $X(t)$ such as $(A+t,B_0)$, $(A+t,B_1)$ and $Q(t)$, the property
	\eq{
		\pd{}{t}\log \det (X(t))=\tn{Tr}(X'(t)X^{-1}(t))
	}
	can be regularized to avoid problems with the trace class, and can be combined with properties of the trace to obtain Equation (\ref{eq: BFK Statement}). The details in the case of the Laplacian are outlined in an approachable fashion in \cite{L97}. We will instead keep $A$ unchanged on the interior and vary the boundary condition continuously from $B_0$ to $B_1$.
\end{rmk}

\vspace{3em}

\section{Properties of the operators $P$, $Q$ and $R$}
\label{sec: props PQR}

The Poisson extension operator $P$, the boundary correspondence operator $Q$ and the resolvent $R$ are closely tied to each other. In this section, we establish several fundamental properties which will later be needed to complete the proof of Theorem \ref{thm: BFK}. To begin, we study the variations of our operators.

\begin{prop}
\label{prop: derivative computations}

With $B'=B_1-B_0$, we have
    \eq{
    \pd{}{t}R_t(z)=-P_t(z)B'R_t(z)\quad\tn{and}\quad\pd{}{z}P_t(z)=R_t(z)P_t(z).
}
\end{prop}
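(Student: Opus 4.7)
The plan is to exploit the two defining identities of $R_t(z)$ and $P_t(z)$ — namely $(A-z)R_t(z)=\tn{id}$, $B_tR_t(z)=0$, $(A-z)P_t(z)=0$, $B_tP_t(z)=\tn{id}$ — and differentiate them in the parameters $t$ and $z$ respectively. The identities will then follow by reading off which elliptic boundary problem the derivative solves and inverting it with the appropriate resolvent or Poisson operator.

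For the first identity, I would fix $u\in L^2(E)$ and set $v_t=R_t(z)u\in H^\omega(E)$, so that $(A-z)v_t=u$ and $B_tv_t=0$. Differentiating the second equation in $t$ — legitimate for $0<t\leq 1$, on which the orders $\omega_j(t)$ are constant and the family $B_t=(1-t)B_0+tB_1$ is linear — gives $B'v_t+B_t\partial_tv_t=0$, and differentiating the first gives $(A-z)\partial_tv_t=0$. Thus $\partial_tv_t$ solves the homogeneous interior problem with boundary data $-B'v_t$, so by the very definition of $P_t(z)$ we obtain $\partial_tv_t=-P_t(z)B'R_t(z)u$. For the second identity, fix boundary data $f$ and set $w=P_t(z)f$; differentiating $(A-z)w=0$ and $B_tw=f$ in $z$ yields $(A-z)\partial_zw=w$ and $B_t\partial_zw=0$. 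Hence $\partial_zw$ solves the interior problem with right-hand side $w=P_t(z)f$ and zero boundary data, so $\partial_zw=R_t(z)P_t(z)f$.

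The main obstacle — or rather the only thing beyond bookkeeping — is justifying that these $t$-derivatives exist in an appropriate operator topology and that differentiating through the boundary condition is legal. Because $\omega_j(t)$ jumps at $t=0$, the target space in which $(A-z,B_t)^{-1}$ lives is not the same at the endpoint as in the open interval, so I would restrict attention to $t\in(0,1]$, where the family $(A-z,B_t)$ is an affine family of invertible elliptic boundary problems between fixed Sobolev spaces. On this interval, smoothness of $t\mapsto R_t(z)$ and $t\mapsto P_t(z)$ in the operator norm on $H^\omega(E)$ is a standard consequence of the second resolvent-type identity applied to the affine family $B_t$. One could also derive the first identity algebraically from a resolvent identity: writing $R_t-R_s$ via $(A-z,B_t)^{-1}-(A-z,B_s)^{-1}$ and using that the two problems differ only in their boundary data produces $R_t-R_s=-P_t(z)(B_t-B_s)R_s(z)=-(t-s)P_t(z)B'R_s(z)$, from which dividing by $t-s$ and letting $s\to t$ gives the result directly; this has the advantage of not requiring any a priori regularity of $t\mapsto R_t$.

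Finally, I would record that the two identities are compatible with each other through the standard formula $\partial_zR_t(z)=R_t(z)^2$, which follows from differentiating $(A-z)R_t(z)=\tn{id}$ and $B_tR_t(z)=0$ in $z$ exactly as above. No extra work is needed for Example \ref{ex: laplacian}, and the proof is purely formal once the domains are handled.
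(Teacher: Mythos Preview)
Your proposal is correct and follows essentially the same approach as the paper: differentiate the defining relations $(A-z)R_t(z)=\tn{id}$, $B_tR_t(z)=0$ (resp.\ $(A-z)P_t(z)=0$, $B_tP_t(z)=\tn{id}$) in $t$ (resp.\ $z$), then recognize the resulting boundary problem and invert with $P_t(z)$ (resp.\ $R_t(z)$). Your added finite-difference identity $R_t-R_s=-(t-s)P_t(z)B'R_s(z)$ is a nice touch that supplies the differentiability the paper's proof simply assumes, but otherwise the arguments are the same.
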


\begin{proof}
	These are straightforward computations. For the first statement, fix $u\in C^\infty(E)$, $0<t<1$, and $z\notin \tn{spec}(A_t)$, and define
	\eq{R_{t+\epsilon}(z)u\eqqcolon v(\epsilon)=v_0+\epsilon\cdot v_1+O(\epsilon^2).}
	By definition, $B_{t+\epsilon}v(\epsilon)=0$ and $(A-z)v(\epsilon)=u$. Differentiating both of these conditions at $\epsilon=0$ yields $B_tv_1=-B'v_0$ and $(A-z)v_1=0$, from which we conclude
	\eq{v_1=-P_t(z)B'v_0,}
	and the result follows since $v_0=R_t(z)u$.
	
	The second part of the claim follows by a similar argument. This time, fix $f\in C^\infty(F_1)\oplus\dots\oplus C^\infty(F_k)$, $0\leq t\leq 1$, and $z\notin \tn{spec}(A_t)$, and define
	\eq{P_t(z+h)f\eqqcolon v(h)=v_0+h\cdot v_1+O(\,\abs{h}^2).}
	By definition, $B_tv(h)= f$ and $(A-z-h)v(h)=0$. Differentiating both of these conditions at $h=0$ yields $B_tv_1=0$ and $(A-z)v_1=v_0$, from which we conclude
	\eq{v_1=R_t(z)v_0,}
	and the result follows since $v_0=P_t(z)f$.
\end{proof}

Next, recall that in order to define the determinant of our generalized Dirichlet-to-Neumann operator
\eq{
	Q(z)\colon C^\infty(F_1)\oplus \dots \oplus C^\infty(F_k)\to C^\infty(F_1)\oplus \dots \oplus C^\infty(F_k),
}
we must first check that this system of $\Psi$DOs satisfies an adapted ellipticity condition, as explained in the introduction. The following proposition confirms this.

\begin{prop}
\label{prop: properties of Q}
	The operator $Q(z)$ is of order
	\eq{
		(-\tn{ord}\, B_0^1, \dots,-\tn{ord}\, B_0^k; \tn{ord}\, B_1^1, \dots,\tn{ord}\, B_1^k)
	} and is elliptic in the sense of Agmon, Douglis and Nirenberg.
\end{prop}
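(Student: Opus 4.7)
The plan is to view $P_0(z)$ as a Poisson-type operator in the standard pseudodifferential boundary calculus of Seeley/Boutet de Monvel, and deduce both the order statement and the ADN ellipticity of $Q(z)$ by composing with $B_1$ at the level of principal symbols. The key input is the Shapiro--Lopatinskii condition, which is encoded in the assumption that $(A,B_t)$ is an elliptic boundary problem at both endpoints $t=0$ and $t=1$.

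For the order statement, I would decompose $Q(z)$ into its components $Q_{ij}(z) = B_1^i \circ P_0(z)^{(j)}$, where $P_0(z)^{(j)}\colon C^\infty(F_j)\to C^\infty(E)$ is the $j$-th column of the Poisson operator. Standard boundary calculus shows $P_0(z)^{(j)}$ has Poisson order $-\tn{ord}\,B_0^j$ (it maps boundary data to a section with regularity gain corresponding to $\omega_j^0+1/2$), while $B_1^i$ has order $\tn{ord}\,B_1^i$. Their composition is a classical $\Psi$DO on $N$ whose order matches the ADN tuple stated in the proposition (modulo the standard conventions for row/column indexing in the definition).

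For the ellipticity, I would compute the principal symbol of $Q(z)$ directly from that of $P_0(z)$. In local coordinates $(x,y)$ near $N$ with $y$ the normal variable, the principal symbol $\sigma(P_0(z))(x,\eta)$ sends $f=(f_1,\dots,f_k) \in \oplus_j(F_j)_x$ to the unique decaying solution $u(y)$ of the model ODE boundary problem
\eqs{
\sigma_\omega(A)(x,0,\eta,D_y)u(y)=0, \qquad \sigma_{\omega_j^0}(B_0^j)(x,\eta,D_y)u(y)\big|_{y=0}=f_j,
}
valued in the space $\mc{M}_+(x,\eta)$ of decaying solutions of the interior model. That this map is a well-defined isomorphism is precisely the $r=0$ instance of condition 2 in the definition of Agmon angle applied to $(A,B_0)$. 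The principal symbol of $Q(z)$ is then $\sigma(Q)(x,\eta) = \sigma(B_1)(x,\eta)\circ \sigma(P_0)(x,\eta)$, where the first factor sends a decaying solution to its $B_1$-boundary data. By applying the same Agmon/Shapiro--Lopatinskii condition to $(A,B_1)$, the latter map is also an isomorphism $\mc{M}_+(x,\eta)\to \oplus_j(F_j)_x$. Hence $\sigma(Q)(x,\eta)$ is a composition of two isomorphisms and so is invertible, which is exactly ADN ellipticity.

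The main obstacle I anticipate is bookkeeping rather than substantive analysis: one must track how the orders $\omega_j^0$ and $\omega_j^1$ (which are allowed to vary with $j$, subject to $\omega_j^1-\omega_j^0$ being constant) thread through the composition to match the ADN convention $\tn{ord}\,Q_{ij}=\alpha_i+\beta_j$ in the proposition. All the underlying analytic content is already packaged in the elliptic boundary problem hypothesis and the Agmon angle assumption, so no new estimates are needed beyond invoking the standard Poisson symbol calculus.
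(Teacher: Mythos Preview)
Your approach is correct and proceeds somewhat differently from the paper's. The paper does not invoke the Boutet de Monvel Poisson calculus directly; instead it builds a Calder\'on--type potential operator $S$ by applying $A^{-1}$ to delta layers on $N$, forms the matrix $T=B_0 S\cdot(\text{triangular matrix of normal coefficients }A_j)$, and observes that invertibility of $\sigma(T)$ is exactly the Shapiro--Lopatinskii condition for $(A,B_0)$. It then takes a parametrix $P$ for $T$ and sets $Q'=B_1SP$, so that $Q-Q'$ is smoothing and $Q'$ is visibly ADN elliptic of the stated order. In effect the paper \emph{constructs} the Poisson operator (modulo smoothing) as $SP$, while you take the Poisson operator as a primitive object in the boundary calculus and read off $\sigma(Q)=\sigma(B_1)\circ\sigma(P_0)$ directly. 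Both routes ultimately hinge on the same input---Shapiro--Lopatinskii at $t=0$ (to invert $\sigma(B_0)$ on $\mc M_+$) and at $t=1$ (to see $\sigma(B_1)$ is an isomorphism off $\mc M_+$)---so neither is deeper than the other. Your version is cleaner if one is willing to quote the Poisson symbol calculus as a black box; the paper's version is more self-contained and makes the Calder\'on structure explicit, which may be preferable for readers less familiar with Boutet de Monvel machinery. Your anticipated bookkeeping issue with the $(\alpha_i;\beta_j)$ indexing is real but harmless, as you note.
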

\begin{proof}
	We show the result for $Q=Q(0)$, since $A$ may be replaced by $A-z$ away from the spectrum. Identifying $N\times [0,1)$ with a neighborhood $U$ of $N\subset M$, we induce coordinates $(x,y)$, with $x\in N$ and $y\in [0,1)$, on $U$. With $D_y=-i\pd{}{y}$, we may write $A$ uniquely as
	\eq{
		A=\sum_0^\omega A_jD_y^j,
	}
	where each $A_j$ is a differential operator of order $\omega-j$ in $x$, which may be viewed as an operator on $C^\infty (E|_N)$. Let $S\colon \oplus_\omega C^\infty (E|_N)\to C^\infty (E)$ be given by
	\eq{
	S(u_1,\dots, u_\omega)=A^{-1}(u_1\otimes \delta,\dots, u_\omega \otimes \delta^{(\omega)}),
	}
	where $\delta^{(j)}=D^j_y\delta$ with $\delta$ the Dirac distribution in $y$, and $\cdot \otimes \delta^{(j)}\colon C^\infty (E|_N)\to \mathscr{D}'(E)$ maps smooth sections $u$ of $E$ over $N$ to $(\cdot \otimes\delta ^{(j)}) (u)(x,y)=u(x)\delta^{(j)}(y)$. (Note that $A^{-1}$ here may be defined by embedding $M$ into a closed manifold and extending $A$, for example.) From \cite[Theorem 2.4.i]{CP00}, $S$ is well defined and its range consists of sections which are smooth up to the boundary. Define then the operator $T\colon \oplus_\omega C^\infty (E|_N)\to \oplus_{j=1}^k C^\infty(F_j)$ by
	\eq{
		T=B_0S 
		\mat{A_1 & &\cdots & &A_\omega\\
		A_2 & &\cdots & A_\omega &0\\
		\vdots & & & & \vdots\\
		A_\omega & 0 & \cdots & & 0}.
	}
	Invertibility of the principal symbol of $T$ is in fact equivalent to the ellipticity of the boundary value problem $(A,B_0)$, following the Boutet de Monvel calculus.
	
	Now, let $P$ be a parametrix for $T$ and define $Q'=B_1SP$, which is elliptic in the sense of Agmon, Douglis and Nirenberg. Observe $Q-Q'$ is smoothing, hence $Q$ is as claimed in the statement of the proposition, and its order is as claimed from the above construction.
\end{proof}

Crucial to Section \ref{sec: bfk} is the following proposition, which defines and then relates $\log Q(z)$ to the integral of the Poisson operators.

\begin{prop}
\label{prop: log of d-to-n}
	There exists a neighborhood $U$ of the ray $\R_-$ on which $\log Q(z)$ is well defined and  holomorphic. Furthermore, when $z\in U$,
	\eq{
	\label{time int eq}
		\int_0^1B'P_t(z)\d t=\log Q(z).
	}
\end{prop}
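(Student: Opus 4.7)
The plan is to reduce the integral identity to a commutative functional-calculus computation by writing $B' P_t(z)$ as an explicit rational function of $Q(z)$ alone, which is possible because the family $B_t$ depends affinely on $t$. Before that, for the holomorphic extension of $\log Q(z)$, I would use that $Q(z)$ is positive self-adjoint for $z \in \R_-$, so $\tn{spec}\,Q(z) \subset (0,\infty)$; since $z \mapsto Q(z)$ is a holomorphic family and $\pi$ is an Agmon angle for $Q(z)$, the spectrum stays uniformly away from $\R_{\leq 0}$ on a neighborhood $U$ of $\R_-$. On $U$ I would then define $\log Q(z)$ by a Dunford--Riesz integral $\frac{1}{2\pi i}\int_\Gamma \log(\lambda)(\lambda - Q(z))^{-1}\,d\lambda$ around a contour $\Gamma \subset \C \setminus \R_{\leq 0}$ enclosing $\tn{spec}\,Q(z)$, with holomorphy in $z$ inherited from the resolvent.

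For the integral identity, the key algebraic observation is
$$B_t P_0(z) = (1-t) B_0 P_0(z) + t B_1 P_0(z) = (1-t) I + t Q(z),$$
valid on smooth boundary data. Since this operator is positive for $z \in \R_-$ (and hence invertible on $U$ by continuity), the section $u = P_0(z)\bigl((1-t) I + t Q(z)\bigr)^{-1} f$ satisfies $(A - z)u = 0$ and $B_t u = f$, so by uniqueness of the Poisson operator $P_t(z) = P_0(z)\bigl((1-t) I + t Q(z)\bigr)^{-1}$. Composing on the left with $B'$ and using $B' P_0(z) = B_1 P_0(z) - B_0 P_0(z) = Q(z) - I$, I obtain
$$B' P_t(z) = (Q(z) - I)\bigl((1-t) I + t Q(z)\bigr)^{-1}.$$

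Since $(1-t)I + tQ(z)$ and its $t$-derivative $Q(z) - I$ are both polynomials in $Q(z)$, they commute, and the functional calculus yields
$$(Q(z) - I)\bigl((1-t) I + t Q(z)\bigr)^{-1} = \frac{d}{dt}\log\bigl((1-t) I + t Q(z)\bigr).$$
Integrating from $t = 0$ to $t = 1$ produces $\log Q(z) - \log I = \log Q(z)$, the desired formula.

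The principal obstacle I anticipate is reconciling the function-space bookkeeping: the codomain of $P_t(z)$ jumps in regularity at $t = 0$ (because $\omega_j(t)$ does), and $B'$ has nonzero order, so the identity $P_t(z) = P_0(z)\bigl((1-t)I + tQ(z)\bigr)^{-1}$ must be interpreted carefully as an equality of operators between the correct Sobolev spaces. The cleanest route is to derive it first on smooth boundary data, where all compositions are unambiguous, and then extend by density using the pseudodifferential mapping properties of $Q(z)$ and its inverse. A secondary technical point is justifying differentiation under the Dunford--Riesz integral to obtain $\frac{d}{dt}\log\bigl((1-t)I + tQ(z)\bigr) = (Q(z)-I)\bigl((1-t)I + tQ(z)\bigr)^{-1}$; this is routine once one produces a contour $\Gamma$ enclosing $\tn{spec}\bigl((1-t)I+tQ(z)\bigr)$ uniformly for $t \in [0,1]$ and $z \in U$, which exists by the same spectral continuity used above.
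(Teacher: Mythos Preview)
Your proposal is correct and follows essentially the same approach as the paper: both rest on the identity $B_t P_0(z) = (1-t)I + tQ(z)$, invert it to express $P_t(z)$ in terms of $P_0(z)$ and $Q(z)$, and then integrate $(Q(z)-I)\bigl((1-t)I+tQ(z)\bigr)^{-1}$ in $t$ to obtain $\log Q(z)$. The paper carries this out eigenvalue-by-eigenvalue after diagonalizing $Q(z)$ via the spectral theorem, whereas you phrase the same computation as an operator identity through the Dunford--Riesz functional calculus; the content is identical.
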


\begin{proof} As a consequence of Proposition \ref{prop: properties of Q} and the assumption that $Q(z)$ is positive and self-adjoint when $z\in \R_-$, it follows from the Spectral Theorem that the operator $Q(z)$ has an orthonormal basis of eigenvectors with positive eigenvalues when $z\in \R_-$. We may therefore define $\log Q(z)$ by the functional calculus, with the standard branch of the logarithm. Perturbation theory of holomorphic families of operators then ensures that such a basis exists for all $z\in U$, with $U$ some neighborhood of $\R_-$ (see page 368 of \cite{K95}).

	Now, both sides of (\ref{time int eq}) are holomorphic families of operators on $U$, hence it suffices to show equality on $\R_-$ and then apply the identity theorem to deduce the result. Fix therefore some $z\in \R_-$, and suppose $f$ is a $\lambda$-eigenfunction of $Q(z)$ with $\lambda>0$. Set $u=P_0(z)f$. Since
\eq{
    B_tu=(1-t)B_0u+tB_1u=(1+t(\lambda-1))f,
}
it follows that
\eq{
    P_t(z)f=\frac{1}{(1+t(\lambda-1))}\cdot u,
}
and so
\eq{
    B'P_t(z)f=\frac{\lambda-1}{(1+t(\lambda-1))}f.
}
Finally, integrating in $t$ yields
\eq{
    \b{\int_0^1 B'P_t(z)\d t}f=\b{\int_0^1\frac{\lambda-1}{(1+t(\lambda-1))}\d t}f=\log(\lambda)f.
}
Conclude that Equation \ref{time int eq} holds for every eigenfunction, and thus holds in general, concluding the proof.
\end{proof}

\vspace{3em}

\section{The BFK Formula}
\label{sec: bfk}

We may now turn our attention to the proof of our main theorem. We begin by computing the difference of the zeta functions corresponding to $A_0$ and $A_1$ by interpolating between our boundary conditions.

\begin{prop}
\label{prop: diff of zeta functions}
	For $\Re(s)$ sufficiently large, 
	\eq{
	\label{eq: diff of zeta}
		\zeta_1(s)-\zeta_0(s)=\frac{s}{2\pi i}\int_\gamma z^{-s-1}\log \det Q(z)\d z.
	}
\end{prop}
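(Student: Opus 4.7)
The plan is to start from the Seeley-calculus representation $\zeta_t(s)=\tn{Tr}\,A_t^{-s}=\frac{1}{2\pi i}\int_\gamma z^{-s}\tn{Tr}\,R_t(z)\d z$, which is valid for $\Re(s)$ large, and rewrite the difference $\zeta_1(s)-\zeta_0(s)$ as an integral in $t$ by applying the fundamental theorem of calculus inside the contour integral. All the remaining work is then to massage the integrand $\partial_t \tn{Tr}\,R_t(z)$ using Propositions 2.1 and 2.3, together with cyclicity of the trace and integration by parts in $z$.

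More concretely, by Proposition 2.1 we have $\partial_t R_t(z)=-P_t(z)B'R_t(z)$. Taking the trace and applying cyclicity formally,
\eqs{
\partial_t\tn{Tr}\,R_t(z)=-\tn{Tr}\b{P_t(z)B'R_t(z)}=-\tn{Tr}\b{B'R_t(z)P_t(z)}=-\tn{Tr}\b{B'\pd{}{z}P_t(z)},
}
where the last equality uses the second identity in Proposition 2.1. The point of this rewriting is that the $z$-derivative can now be removed by integration by parts against $z^{-s}$ along $\gamma$: the boundary contribution vanishes (either by closing up the contour at infinity using the decay of $P_t(z)$ encoded in resolvent bounds or by a suitable cutoff argument), and one is left with
\eqs{
\int_\gamma z^{-s}\tn{Tr}\b{B'\pd{}{z}P_t(z)}\d z=s\int_\gamma z^{-s-1}\tn{Tr}\b{B'P_t(z)}\d z.
}
Pulling the $t$-integral inside (justified by uniform estimates in $t$ on the integrand) and applying Proposition 2.3 yields
\eqs{
\zeta_1(s)-\zeta_0(s)=\frac{-s}{2\pi i}\int_\gamma z^{-s-1}\tn{Tr}\b{\int_0^1 B'P_t(z)\d t}\d z=\frac{-s}{2\pi i}\int_\gamma z^{-s-1}\tn{Tr}\b{\log Q(z)}\d z,
}
and identifying $\tn{Tr}(\log Q(z))$ with $\log\det Q(z)$ (the standard relation between the zeta-regularized trace of the logarithm of an Agmon--Douglis--Nirenberg elliptic operator and its zeta-regularized determinant) gives the proposition.

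The main obstacle will be rigorously justifying the cyclicity step $\tn{Tr}(P_tB'R_t)=\tn{Tr}(B'R_tP_t)$, because the factors act between different function spaces: $R_t(z)\colon L^2(E)\to H^\omega(E)$, $B'\colon H^\omega(E)\to \oplus_j H^{\omega-\omega_j-1/2}(F_j)$, and $P_t(z)$ maps back to $H^\omega(E)$, so the two composed operators live on the bulk and on the boundary respectively. I would address this by verifying that $P_t(z)B'R_t(z)$ is trace class on $L^2(E)$ and that $B'P_t(z)$ composed with the boundary trace of $R_t(z)$ is trace class on the boundary $L^2$-space, then invoking the standard fact that trace is invariant under cyclic permutation of bounded operators provided the composition is trace class on both sides; the required smoothing comes from $R_t(z)$ improving regularity by $\omega$ orders while $B'$ loses only $\omega-1$ and the boundary restriction loses an additional $1/2$, so on a $d$-dimensional manifold iterating the argument a finite number of times gives trace class operators. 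A secondary but routine difficulty is controlling the behavior of the integrand near $z=0$ and near infinity on $\gamma$ well enough to justify interchanging the $t$- and $z$-integrations and to discard the boundary terms in the integration by parts; this follows from the standard resolvent estimates that underlie the Seeley calculus in the Agmon-angle sector, together with Proposition 2.3 giving holomorphicity of $\log Q(z)$ near $\R_-$.
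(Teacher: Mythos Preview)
Your overall strategy matches the paper's (differentiate in $t$, use Proposition~2.1, cyclically permute, integrate by parts in $z$, then apply Proposition~2.3), but there is a genuine gap precisely at the point you flag as the main obstacle, and it is more serious than a routine justification. The operators $B'P_t(z)$ and $\log Q(z)$ are \emph{not} trace class on the boundary: by Proposition~2.2, $Q(z)$ is a positive-order elliptic system, so $\log Q(z)$ is not trace class, and the expressions $\Tr\bigl(B'\partial_z P_t(z)\bigr)$, $\Tr\bigl(B'P_t(z)\bigr)$ and $\Tr\bigl(\log Q(z)\bigr)$ have no meaning for fixed $z$. Your final step, ``identifying $\Tr(\log Q(z))$ with $\log\det Q(z)$'', is therefore not a standard identity but exactly the content that needs proof; the zeta-regularized determinant is \emph{defined} as $\exp(-\zeta_Q'(0))$, not via $\exp(\Tr\log Q)$.

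The paper singles this issue out explicitly: in the remark preceding the proof it shows, in the model case of the Laplacian on a half-space, that $R_1(z)-R_0(z)=-\int_0^1 P_t(z)B'R_t(z)\,dt$ fails to be trace class, so the cyclicity step cannot be salvaged by your ``iterating a finite number of times'' argument---the defect comes from factoring through the boundary, not from insufficient interior smoothing. The paper's fix is to introduce an auxiliary parameter $w$ and work with $\partial_w\Tr\bigl(Q(z)^{-w}\bigr)$ in place of $\Tr\bigl(\log Q(z)\bigr)$, which is honest for $\Re(w)$ large since $Q(z)^{-w}$ is then genuinely trace class. All of your manipulations (Proposition~2.3, integration by parts in $z$, cyclic permutation) are performed with the extra factor $Q(z)^{-w}$ inserted so that every trace is legitimate; only afterward does one analytically continue and evaluate at $w=0$, where one of the two resulting terms vanishes and the other equals $\zeta_1(s)-\zeta_0(s)$, now well defined because the $z$-integral against $z^{-s}$ supplies the missing smoothing. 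Your outline becomes a correct proof once this regularization is inserted.
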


\begin{rmk} The proposition above resembles the characterization of \textit{$\zeta$-comparable operators} introduced in Section 3.3.4 of \cite{S10}. However, our setting is different since $R_1(z)-R_0(z)$ need not be trace class, hence these operators are in fact not $\zeta$-comparable. To see this, let us return to Example \ref{ex: laplacian} and study the model case of the upper half-space $\R^n_+$ for $n\geq 3$. With
\eq{
	G(x,y,x',y')=\b{\abs{x-x'}^2+\abs{y-y'}^2}^{2-n}
}
and
\eq{
	H(x,y,x',y')=\b{\abs{x+x'}^2+\abs{y-y'}^2}^{2-n},
}
the Green's function for the Dirichlet problem is $G-H$, and for the Neumann problem is $G+H$. The difference of these is therefore $2H$, which, when written in projective coordinates with $p=x/x'$ and $q=(y-y')/x$ then restricted to the $q=0$ diagonal, equals $(x')^{2-n}(1+p)^{2-n}$. The corresponding operator is therefore not trace class, meaning that our setting does not fall into the framework of $\zeta$-comparable operators.
\end{rmk}

To address the difficulty underscored by the above remark, we achieve regularization by taking high powers of $Q(z)^{-1}$. Since $Q(z)$ is assumed to be of positive order, these powers are eventually trace class.

\begin{proof}[Proof of Proposition \ref{prop: diff of zeta functions}] It follows from \cite{S67} that, since $Q(z)$ is a positive order $\Psi$DO on $N$, a boundaryless closed manifold, the function $F(z,w)=\Tr\b{Q(z)^{-w}}$ is well defined and holomorphic in $w$ when $\Re(w)>\dim N/\tn{ord }Q(z)$. Furthermore, since $Q(z)$ depends holomorphically on $z$, we may extend $F$ meromorphically to $\C^2$. Note in particular that, by definition, $\pd{F}{w}(z,0)=-\log \det Q(z)$ since the determinant is defined precisely in term of the derivative of this meromorphic extension.

Now, take $U$ as in the proof of Proposition \ref{prop: log of d-to-n}. Both $\det Q(z)$ and $\log Q(z)$ are well defined and holomorphic on $U$, and we may  take the path $\gamma$ to lie entirely in $U$. Since the trace class is an ideal, we have that for $\Re(w)$ sufficiently large
\eq{
	\pd{F}{w}(z,w)=-\Tr\b{\log Q(z)Q(z)^{-w}}=-\int_0^1\Tr\b{B'P_t(z)Q(z)^{-w}}\d t,
}
where the second equality follows from Proposition \ref{prop: log of d-to-n}. Here, we may commute the trace and the integral as $B'P_t(z)Q(z)^{-w}$,  $0\leq t\leq 1$, is a compact path in the space of trace class operators (again because $Q(z)^{-w}$ is in the trace class).

We then integrate by parts in $z$ and apply Proposition \ref{prop: derivative computations} to obtain


\aeq{
	\frac{s}{2\pi i}\int_\gamma z^{-s-1}\pd{F}{w}(z,w)\d z&=\frac{1}{2\pi i}\int_\gamma \pd{}{z}\b{z^{-s}}\cdot \int_0^1\Tr\b{B'P_t(z)Q(z)^{-w}}\d t\d z\\
	&=\frac{-1}{2\pi i}\int_\gamma z^{-s} \int_0^1 \,\Tr\Big{(}B'R_t(z)P_t(z)Q(z)^{-w}\\
	&\qquad\qquad\qquad\qquad\quad+B'P_t(z)\pd{}{z}\b{Q(z)^{-w}}\Big{)}\d t\d z\\
	&=T_1(s,w)+T_2(s,w)
}
when $s$ is sufficiently large so that the terms at infinity tend to zero. Here $T_1(s,w)$ and $T_2(s,w)$ represent the two terms of this expression. By commutativity of the trace (and once again thanks to the $Q(z)^{-w}$ term ensuring our integrand is trace class),
\eq{
	T_1(s,w)=\frac{-1}{2\pi i}\Tr\b{\int_\gamma z^{-s}\int_0^1 P_t(z)Q(z)^{-w}B'R_t(z) \d t \d z}.
}
We note, however, that this expression is regular at $w=0$, and deduce that for $s$ with sufficiently large real part (and hence for all $s$ by analytic continuation), Proposition \ref{prop: derivative computations} gives
\aeq{
	T_1(s,0)&=\Tr\b{\frac{1}{2\pi i}\int_\gamma z^{-s}\int_0^1\pd{}{t}R_t(z)\d t\d z}\\
	&=\Tr\b{\frac{i}{2\pi}\int_\gamma z^{-s}\b{R_0(z)-R_1(z)}\d z}\\
	&=\Tr\b{A_0^{-s}-A_1^{-s}}=\zeta_0(s)-\zeta_1(s).
}
On the other hand, $T_2$ vanishes at $w=0$ since $\pd{}{z}\b{Q(z)^0}=0$. We conclude
\eq{
	\zeta_1(s)-\zeta_0(s)=\frac{-s}{2\pi i}\int_\gamma z^{-s-1}\pd{F}{w}(z,0)\d z=\frac{s}{2\pi i}\int_\gamma z^{-s-1}\log \det Q(z)\d z,
}
as desired.
\end{proof}

The final step towards our main theorem is to apply the well known asymptotic expansion of determinants along a ray in the complex plane, proven in the appendix of \cite{BFK92} and stated for $\log \det Q(x)$ below.

\begin{prop} 
\label{prop: asymptotic expansion}
	The function $\log\det  Q (x)$ admits an asymptotic expansion of the form
	\eq{
		\log\det  Q (x)\sim \sum_{j=-(d-1)}^\infty \pi_j\abs{x}^{-j/2}+\sum_{j=0}^{d-1} q_j\abs{x}^{j/2}\log \abs x \quad\tn{as }x\to -\infty.
	}
	The coefficients $\pi_j$ and $q_j$ can be evaluated in terms of the symbol of $Q$, and thus depend only on the germs of the symbols of $A, B_0$ and $B_1$ at $N$.
\end{prop}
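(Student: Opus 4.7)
The plan is to reduce the question to the large-parameter symbolic calculus for $\Psi$DOs on the closed manifold $N$. Throughout, write $\log \det Q(x) = -\zeta_{Q(x)}'(0)$ where $\zeta_{Q(x)}(s) = \Tr\bigl(Q(x)^{-s}\bigr)$ is defined by the Seeley construction for $\Re(s)$ large and then meromorphically continued. The goal is thus to analyze how the meromorphic family $s \mapsto \zeta_{Q(x)}(s)$, and in particular its derivative at $s=0$, depends on $x$ as $x \to -\infty$.

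The first major step is to produce a complete symbolic asymptotic expansion of $Q(x)$ in the joint parameter $(\xi,x)$, refining Proposition \ref{prop: properties of Q}. Working in collar coordinates $(x,y)$ near $N$ as before, one constructs a parametrix of $(A-x,B_0)$ in the Seeley calculus with parameter, viewing $x$ as an additional covariable of weight $\omega$. Applying $B_1$ to the Poisson kernel of this parametrix, one obtains a parameter-dependent $\Psi$DO symbol for $Q(x)$ of the form
\eqs{
  \sigma(Q(x))(p,\eta) \sim \sum_{k\geq 0} q_k(p,\eta,x),
}
where each $q_k$ is homogeneous in $(\eta,|x|^{1/\omega})$ of degree $(\omega_1 - \omega_0) - k$ and is computed polynomially in the symbols of $A$, $B_0$, $B_1$ along $N$. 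This is the point where the locality of the coefficients $\pi_j$ and $q_j$ is built in.

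Next, I would plug this expansion into the resolvent representation
\eqs{
  Q(x)^{-s} = \frac{1}{2\pi i}\int_{\gamma'} \mu^{-s}(Q(x)-\mu)^{-1}\d\mu,
}
and compute the symbol of $(Q(x)-\mu)^{-1}$ in the double parameter $(x,\mu)$. Taking fiberwise traces and integrating over $T^*N$ produces an expansion
\eqs{
  \Tr\bigl(Q(x)^{-s}\bigr) \sim \sum_{j} \frac{c_j(s)}{s - s_j} + (\tn{entire part}),
}
whose pole structure is determined by the homogeneity data above: each $q_k$ contributes a simple pole at a location depending on $(\dim N, \omega, \omega_1-\omega_0, k)$ whose residue is a homogeneous function of $|x|$. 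Differentiating at $s=0$ and separating the regular part from the simple poles that collide at $s=0$ produces pure power terms $\pi_j |x|^{-j}$ from the regular part and the logarithmic terms $q_j|x|^j\log|x|$ from the confluent poles, giving exactly the claimed expansion. The finite range $-d \leq j \leq d$ for the log contributions reflects the fact that only finitely many homogeneity sectors can collide with $s=0$, controlled by $\dim N = d-1$ and the weights.

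The main technical obstacle is the bookkeeping of step two: tracking the joint homogeneity of the parametrix symbols in $(\eta,x)$ as one iterates the construction, and showing that the remainder after truncating at order $k$ genuinely contributes only to $O(|x|^{-N})$ terms for $N$ large once $k$ is taken large enough. This requires estimates uniform in $x \in \R_-$ on the parametrix construction, which is standard in the Seeley-with-parameter framework but requires care where the symbols of $B_0$ and $B_1$ have different orders; the fact that $\omega_1 - \omega_0$ is constant across the boundary components is used here. Once this uniform symbolic control is in place, the asymptotic expansion and the locality of $\pi_j$ and $q_j$ follow by inspection of the explicit formulas for each coefficient.
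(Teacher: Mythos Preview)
The paper does not actually prove this proposition: it is stated as ``well known'' and attributed to the appendix of \cite{BFK92}, with no argument given in the present paper. So there is no ``paper's own proof'' to compare against beyond that citation.

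That said, your sketch is essentially the standard route (and matches in spirit what one finds in \cite{BFK92}): treat $x$ as an auxiliary covariable of weight $\omega$ in the Seeley parameter-dependent calculus, obtain a full symbol expansion of $Q(x)$ with joint homogeneity in $(\eta,|x|^{1/\omega})$, and then run the usual resolvent-trace analysis to extract the pole structure of $\zeta_{Q(x)}(s)$ and hence the asymptotics of $-\zeta_{Q(x)}'(0)$. The locality claim is correctly traced to the fact that each symbol term $q_k$ is a universal polynomial in the jets of the symbols of $A$, $B_0$, $B_1$ along $N$. Your identification of the source of the $\log|x|$ terms (confluence of poles at $s=0$) is the right mechanism. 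The only place where the sketch is a bit loose is the accounting for the precise ranges of $j$ in the two sums and the exact weights; these come out of the explicit homogeneity degrees $(\dim N,\ \omega,\ \omega_1-\omega_0)$ and would need to be tracked carefully in a full write-up, but there is no conceptual gap.
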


We may now prove our theorem.

\begin{proof}[Proof of Theorem \ref{thm: BFK}]
	 We split the integral on the right hand side of Equation (\ref{eq: diff of zeta}) into three pieces by splitting the contour $\gamma\subset U$ into the ray $\gamma_{+,\delta}$ a distance $\delta/2$ above the negative real axis, a small loop $\gamma_{\epsilon,\delta}$ of radius $\epsilon$ around the origin, and the ray $\gamma_{-,\delta}$ a distance $\delta/2$ below the negative real axis, as in Figure 1.

\begin{figure}[H]
\centering

\begin{tikzpicture}

    \draw[thick] (-3,0.2) -- (0.5,0.2);
    \draw[thick] (-3,-0.2) -- (0.5,-0.2);
    \draw[thick] (1,0) circle (0.54cm);
    \filldraw[white] (-3,-0.18) rectangle (1,0.18);
    \draw[thick, ->] (-2.001,0.2) -- (-2,0.2);
    \draw[thick, ->] (-2.05,-0.2) -- (-2.1,-0.2);

    \draw[help lines, ->] (-3,0) -- (3,0);
    \draw[help lines, ->] (1,-3) -- (1,2);
    \draw[<->] (0.2,-0.18) -- (0.2,0.18);
    \draw[<->] (1.01,0.01) -- (1.37,0.37);
    
    \filldraw[white] (0.28,-0.1) rectangle (0.38,0.1);
    \filldraw[white] (-2.5,-3) rectangle (2.5,-1.5);
    
    \node at (2.5,1.5) {$\C$};
    \node at (1.7,0.5) {$\gamma_{\epsilon,\delta}$};
    \node at (-1,0.5) {$\gamma_{+,\delta}$};
    \node at (-1,-0.5) {$\gamma_{-,\delta}$};
    \node at (0.35,0.0) {$_{_\delta}$};
    \node at (1.3,0.13) {$_{_\epsilon}$};
    \node[label={[align=center]below:Figure 1: The contour $\gamma$ used in\\the proof of Theorem \ref{thm: BFK}.}] at (0,-1.5) {};
    
\end{tikzpicture}
\end{figure}

	\noindent Then, take $\delta\to 0$ to obtain
	\aeq{
	\label{almost done}
		\frac{s}{2\pi i}\int_\gamma z^{-s-1}\log \det Q(z)\d z=\frac{s}{\pi}\sin (\pi s)&\int_{-\infty}^{-\epsilon}(-x)^{-s-1}\log\det  Q (x)\d x\\
		&+\frac{s}{2\pi i}\int_{\gamma_{\epsilon,0}} z^{-s-1}\log \det Q(z)\d z.
	}
	By Proposition \ref{prop: asymptotic expansion}, we compute, initially for $\Re(s)>(d-1)/2$ but then by analytic continuation to the right half plane $\Re(s)>-(N+1)/2$ for any $N\geq 0$, that
	\eq{
	\int_{-\infty}^{-\epsilon}(-x)^{-s-1}\log\det  Q (x)\d x=\sum_{j=-(d-1)}^N \frac{\pi_j}{s+j/2}+\sum_{j=0}^{d-1}\frac{q_j}{(s-j/2)^2}+h(s),
	}
	where $h(s)$ is holomorphic when $\Re(s)>-(N+1)/2$. Therefore,
	\eq{
		\pd{}{s}\bigg|_{s=0}\b{\frac{s}{\pi}\sin (\pi s)\int_{-\infty}^{-\epsilon}(-x)^{-s-1}\log\det  Q (x)\d x}=\pi_0.
	}
	To address the second term of Equation \ref{almost done}, recall that $\det Q(z)$ is holomorphic on $U$ and so, by the residue theorem,
	\aeq{
		\pd{}{s}\bigg|_{s=0}\b{\frac{s}{2\pi i}\int_{\gamma_{\epsilon,0}} z^{-s-1}\log \det Q(z)\d z}&=\frac{1}{2\pi i}\int_{\gamma_{\epsilon,0}}\frac{1}{z}\log \det  Q (z)\d z\\
		&=-\log \det Q.
	}
	(Note the negative sign above comes from the fact that the loop $\gamma_{\epsilon,0}$ is travelled clockwise.) Combine these observations to conclude
	\aeq{
		\log \det A_1-\log \det A_0&=-\pd{}{s}\bigg|_{s=0}\b{\zeta_1(s)-\zeta_0(s)}\\
		&=-\pd{}{s}\bigg|_{s=0}\b{\frac{s}{2\pi i}\int_\gamma z^{-s-1}\log \det Q(z)\d z}\\
		&=\log \det Q-\pi_0.
	}
	The result follows after exponentiating.
\end{proof}

\newpage

\bibliography{bib.bib}{}

@article{BFK92,
title = {Meyer-{V}ietoris type formula for determinants of elliptic differential operators},
journal = {Journal of Functional Analysis},
volume = {107},
number = {1},
pages = {34-65},
year = {1992},
issn = {0022-1236},
doi = {https://doi.org/10.1016/0022-1236(92)90099-5},
url = {https://www.sciencedirect.com/science/article/pii/0022123692900995},
author = {Burghelea, D. and Friedlander, L. and Kappeler, T.},
abstract = {For a closed codimension one submanifold Γ of a compact manifold M, let MΓ be the manifold with boundary obtained by cutting M along Γ. Let A be an elliptic differential operator on M and B and C be two complementary boundary conditions on Γ. If (A, B) is an elliptic boundary valued problem on MΓ, then one defines an elliptic pseudodifferential operator R of Neumann type on Γ and prove the following factorization formula for the ζ-regularized determinants: DetADet(A, B) = KDetR, with K a local quantity depending only on the jets of the symbols of A, B and C along Γ. The particular case when M has dimension 2, A is the Laplace-Beltrami operator, and B resp. C is the Dirichlet resp. Neumann boundary condition is considered.}
}

@article{LP05,
doi = {10.1088/0305-4470/38/41/009},
url = {https://doi.org/10.1088/0305-4470/38/41/009},
year = {2005},
publisher = {},
volume = {38},
number = {41},
pages = {8967},
author = {Loya, P. and Park, J.},
title = {$\zeta$-determinants of Laplacians with Neumann and Dirichlet boundary conditions},
journal = {Journal of Physics A: Mathematical and General},
abstract = {In this paper, we derive a formula for the ratio of the $\zeta$-determinants of the Laplacian with Neumann and Dirichlet boundary conditions over a noncompact manifold with an infinite cylindrical end and a compact boundary in terms of the $\zeta$-determinant of the Dirichlet to Neumann map.}
}

@article{BL99,
author = {Br{\"u}ning, J. and Lesch, M.},
title = {On the $\eta$-invariant of certain nonlocal boundary value problems},
volume = {96},
journal = {Duke Mathematical Journal},
number = {2},
publisher = {Duke University Press},
pages = {425 -- 468},
year = {1999},
doi = {10.1215/S0012-7094-99-09613-8},
URL = {https://doi.org/10.1215/S0012-7094-99-09613-8}
}

@article{RS71,
title = {R-Torsion and the {L}aplacian on {R}iemannian manifolds},
journal = {Advances in Mathematics},
volume = {7},
number = {2},
pages = {145-210},
year = {1971},
issn = {0001-8708},
doi = {https://doi.org/10.1016/0001-8708(71)90045-4},
url = {https://www.sciencedirect.com/science/article/pii/0001870871900454},
author = {Ray, D.B. and Singer, I.M.}
}

@book{K95,
	author = {Kato, T.},
	title = {Perturbation Theory for Linear Operators},
	year = 1995,
	publisher = {Springer Berlin, Heidelberg}
}

@book{H07,
	author = {H\"ormander, L.},
	title = {The Analysis of Linear Partial Differential Operators III},
	year = 2007,
	publisher = {Springer Berlin, Heidelberg}
}

@book{S10,
    author = {Scott, S.},
    isbn = {9780198568360},
    title = {Traces and Determinants of Pseudodifferential Operators},
    publisher = {Oxford University Press},
    year = {2010},
    doi = {10.1093/acprof:oso/9780198568360.003.0001},
    url = {https://doi.org/10.1093/acprof:oso/9780198568360.003.0001},
    eprint = {https://academic.oup.com/book/0/chapter/156239007/chapter-pdf/38996734/acprof-9780198568360-chapter-1.pdf},
}

@book{CP00,
    author = {Chazarain, J. and Piriou, A.},
    isbn = {9780080875354},
    title = {Introduction to the Theory of Linear Partial Differential Equations},
    publisher = {North-Holland Publishing Company},
    year = {2000}
}

@article{S69-2,
 ISSN = {00029327, 10806377},
 URL = {http://www.jstor.org/stable/2373312},
 author = {Seeley, R.T.},
 journal = {American Journal of Mathematics},
 number = {4},
 pages = {963--983},
 publisher = {Johns Hopkins University Press},
 title = {Analytic Extension of the Trace Associated with Elliptic Boundary Problems},
 urldate = {2024-04-26},
 volume = {91},
 year = {1969}
}

@article{S69-1,
 ISSN = {00029327, 10806377},
 URL = {http://www.jstor.org/stable/2373309},
 author = {Seeley, R.T.},
 journal = {American Journal of Mathematics},
 number = {4},
 pages = {889--920},
 publisher = {Johns Hopkins University Press},
 title = {The Resolvent of an Elliptic Boundary Problem},
 urldate = {2024-04-26},
 volume = {91},
 year = {1969}
}

@article{MM95,
 URL = {https://doi.org/10.1007/BF01928215},
 author = {Mazzeo, R. and Melrose, R.},
 journal = {Geometric and Functional Analysis},
 number = {5},
 pages = {14--75},
 title = {Analytic surgery and the eta invariant},
 year = {1995}
}

@article{HMM95,
 URL = {https://www.intlpress.com/site/pub/files/_fulltext/journals/cag/1995/0003/0001/CAG-1995-0003-0001-a004.pdf},
 author = {Hassell, A. and Mazzeo, R. and Melrose, R.},
 journal = {Communications in Analysis and Geometry},
 volume = {3},
 number = {1},
 pages = {115--222},
 title = {Analytic surgery and the accumulation of eigenvalues},
 year = {1995}
}

@article{CLM96,
 URL = {https://www.maths.ed.ac.uk/~v1ranick/papers/clm1.pdf},
 author = {Cappell, S. and Lee, R. and Miller, E.Y.},
 journal = {Communications on Pure and Applied Mathematics},
 volume = {49},
 pages = {825--866},
 title = {Self-Adjoint Elliptic Operators and Manifold Decompositions Part {I}: Low Eigenmodes and Stretching},
 year = {1996}
}

@article{M94,
 author = {M\"{u}ller, W.},
 journal = {Journal of Differential Geometry},
 volume = {40},
 pages = {311--377},
 title = {Eta Invariants and Manifolds with Boundary},
 year = {1994}
}

@article{L97,
 author = {Lee, Y.},
 journal = {Differential Geometry and its Applications},
 volume = {7},
 pages = {325--340},
 title = {Mayer-{V}ietoris formula for determinants of elliptic operators of {L}aplace-{B}eltrami type (after {B}urghelea, {F}riedlander and {K}appeler)},
 year = {1997}
}

@article{B95,
 author = {Bunke, U.},
 journal = {Journal of Differential Geometry},
 volume = {41},
 pages = {397--448},
 title = {On the gluing problem for the $\eta$-invariant},
 year = {1995}
}

@article{S67,
 author = {Seeley, R.T.},
 journal = {Proceedings of Symposia in Pure Mathematics},
 pages = {288--307},
 publisher = {American Mathematica Society},
 title = {Complex Powers of an Elliptic Operator},
 year = {1967}
}

@article{OPS88,
title = {Extremals of determinants of Laplacians},
journal = {Journal of Functional Analysis},
volume = {80},
number = {1},
pages = {148-211},
year = {1988},
issn = {0022-1236},
doi = {https://doi.org/10.1016/0022-1236(88)90070-5},
url = {https://www.sciencedirect.com/science/article/pii/0022123688900705},
author = {B. Osgood and R. Phillips and P. Sarnak}
}

@book{R97, 
place={Cambridge},
series={London Mathematical Society Student Texts},
title={The Laplacian on a Riemannian Manifold: An Introduction to Analysis on Manifolds},
publisher={Cambridge University Press},
author={Rosenberg, S.}, year={1997},
collection={London Mathematical Society Student Texts}
}
\bibliographystyle{plain}

\end{document}